\providecommand{\U}[1]{\protect\rule{.1in}{.1in}}
\newtheorem{theorem}{Theorem}[section]
\newtheorem{example}[theorem]{Example}
\newtheorem{lemma}[theorem]{Lemma}
\newtheorem{definition}[theorem]{Definition}
\numberwithin{equation}{section}
\begin{document}
\title{Pointwise lineability in sequence spaces}

\begin{abstract}
We prove that several results of lineability/spaceability in the framework of
sequence spaces are valid in a stricter sense.

\end{abstract}
\author[D. Pellegrino]{Daniel Pellegrino}
\address{Departamento de Matem\'{a}tica \\
Universidade Federal da Para\'{\i}ba \\
58.051-900 - Jo\~{a}o Pessoa, Brazil.}
\email{daniel.pellegrino@academico.ufpb.br and dmpellegrino@gmail.com}
\author[A. Raposo Jr.]{Anselmo Raposo Jr.}
\address{Departamento de Matem\'{a}tica \\
Universidade Federal do Maranh\~{a}o \\
65085-580 - S\~{a}o Lu\'{\i}s, Brazil.}
\email{anselmo.junior@ufma.br}
\thanks{D. Pellegrino was partially supported by CNPq 307327/2017-5 and Grant
2019/0014 Paraiba State Research Foundation (FAPESQ)}
\keywords{Spaceability; lineability; sequence spaces}
\subjclass[2020]{46B87, 15A03, 47B37, 47L05}
\maketitle

\section{Introduction}

The notions of lineability and spaceability were introduced in the seminal
paper \cite{AGSS} by Aron, Gurariy, and Seoane-Sep\'{u}lveda. If $V$ is a
vector space and $\alpha$ is a cardinal number, a subset $A$ of $V$ is called
$\alpha\text{-lineable}$ in $V$ if $A\cup\left\{  0\right\}  $ contains an
$\alpha$-dimensional linear subspace $W$ of $V$. When $V$ has a topology and
the subspace $W$ can be chosen to be closed, we say that $A$ is $\alpha
$-spaceable. For details on the theory of lineability/spaceability we refer to
\cite{AronBGPSS, BGPSS} and for recent results we refer to \cite{a0,a1,a2,
falco} and the references therein.

In this paper we shall be interested in the following stricter variant of
lineability/spaceability, introduced in \cite{FPT}. Let $\alpha$, $\beta$ and
$\lambda$ be cardinal numbers and $V$ be a vector space, with $\dim V=\lambda$
and $\alpha<\beta\leq\lambda$. A set $A\subset V$ is $\left(  \alpha
,\beta\right)  $-lineable if $A$ is $\alpha$-lineable and for every subspace
$W_{\alpha}\subset V$ with $W_{\alpha}\subset A\cup\left\{  0\right\}  $ and
$\dim W_{\alpha}=\alpha$, there is a subspace $W_{\beta}\subset V$ with $\dim
W_{\beta}=\beta$ and $W_{\alpha}\subset W_{\beta}\subset A\cup\left\{
0\right\}  $. Furthermore, if $W_{\beta}$ can always be chosen as a closed
subspace, we say that $A$ is $\left(  \alpha,\beta\right)  $-spaceable. Notice
that the ordinary notions of lineability and spaceability are recovered when
$\alpha=0$.

A challenging fashion of the investigation of lineability/spaceability is that
there are only few general techniques (see \cite{BGC, KT}) and each problem
seems to need \textit{ad hoc} arguments to be solved. From now on all vector
spaces are considered over a fixed scalar field $\mathbb{K}$ which can be
either $\mathbb{R}$ or $\mathbb{C}$. For any set $A$ we shall denote by
$\operatorname*{card}\left(  A\right)  $ the cardinality of $A$; in
particular, we denote $\mathfrak{c}=\operatorname*{card}\left(  \mathbb{R}%
\right)  $ and $\aleph_{0}=\operatorname*{card}\left(  \mathbb{N}\right)  $.

In some sense, the problems investigated in the framework of $\left(
\alpha,\beta\right)  $-lineability/space\-ability can be divided into three categories:

\begin{itemize}
\item $\left(  1,\beta\right)  $-lineability/spaceability is obtained but the
techniques seem to be not adapted to $\left(  \alpha,\beta\right)
$-lineability/spaceability for $\alpha>1$;

\item $\left(  \alpha,\beta\right)  $-lineability/spaceability is
characterized for all $\alpha<\beta$;

\item No technique is known to obtain anything else than lineability/spaceability.
\end{itemize}

Let us illustrate examples of all three situations:

In \cite[Theorem 3.1]{FPT} it is shown that if $p,q\geq1$, the set
\[
A:=\left\{  u:\ell_{p}\rightarrow\ell_{q}:u\text{ is continuous and not
injective}\right\}
\]
is $\left(  1,\mathfrak{c}\right)  $-lineable in the set of continuous linear
operators from $\ell_{p}$ to $\ell_{q}$. Still in \cite[Theorem 3.2]{FPT}, it
was proved that%
\[
L_{p}\left[  0,1\right]  \backslash%
%TCIMACRO{\dbigcup \limits_{q>p}}%
%BeginExpansion
{\displaystyle\bigcup\limits_{q>p}}
%EndExpansion
L_{q}\left[  0,1\right]
\]
is $\left(  1,\mathfrak{c}\right)  $-spaceable in $L_{p}\left[  0,1\right]  $
for every $p>0$. Both techniques seem to be not immediately adapted to
$\left(  n,\mathfrak{c}\right)  $-lineability/spaceability for $n>1$. The
second case is illustrated by the following result of \cite[Theorem 3.2]{FPT}:
the set
\[
\ell_{p}\backslash\bigcup_{0<q<p}\ell_{q}%
\]
is $\left(  \alpha,\mathfrak{c}\right)  $-spaceable in $\ell_{p}$ if, and only
if, $\alpha<\aleph_{0}$. Finally the third case can be illustrated by results
related to norm attaining operators (see, for instance, \cite[Proposition
6]{PT}).

In this paper we introduce the notion of pointwise lineability/spaceability.
This new concept is closely linked to the notion of $\left(  1,\beta\right)
$-lineability/spaceability but being of a stricter nature. Our main goal is to
develop general techniques in the framework of pointwise
lineability/spaceability in sequence spaces. Lineability and spaceability were
exhaustively investigated in this framework (see \cite{BCFP, BDFP, BF, CSS,
FPT} and the references therein) and in the present paper we shall show
several of these results hold for the more involved notion of pointwise lineability/spaceability.

This paper is organized as follows. In Section $2$ we introduce the notion of
pointwise lineability and preliminary terminology that shall be used
throughout the paper. In Section $3$ we prove our main results, which show
that the main results of \cite{BCFP, BDFP, BF,NP} are valid in the context of
pointwise spaceability.

\section{Pointwise spaceability in sequence spaces}

We start this section establishing the concept of pointwise lineability/spaceability.

\begin{definition}
Let $V$ be an infinite-dimensional vector space over $\mathbb{K}$ and let be
$A$ a non-empty subset of $V$. We say that $A$ is pointwise lineable if for
each $x\in A$ there is an infinite-dimensional subspace $W_{x}$ such that%
\[
x\in W_{x}\subset A\cup\left\{  0\right\}  \text{\emph{.}}%
\]
When $\alpha\leq\dim\left(  V\right)  $ is a cardinal number, we will say that
$A$ is pointwise $\alpha$-lineable if for each $x\in A$ there is a subspace
$W_{x,\alpha}$ such that%
\[
\dim\left(  W_{x,\alpha}\right)  =\alpha\text{ \ \ \ \ and \ \ \ \ }x\in
W_{x,\alpha}\subset A\cup\left\{  0\right\}  \text{.}%
\]
Similarly, we define pointwise spaceability.
\end{definition}

It is obvious that pointwise lineability/spaceability implies $\left(
1,\beta\right)  $-lineability/spaceability. It is also simple to show that the
converse is not true. In fact, $\mathbb{R}^{2}\cup\left\{  (1,1,1)\right\}  $
is not pointwise lineable in $\mathbb{R}^{3}$ but it is $\left(  1,2\right)
$-lineable in $\mathbb{R}^{3}$. The following example is perhaps more
interesting and less artificial.

\begin{example}
Let $X$ be an infinite-dimensional Banach space endowed with weak-topology and
let $A\neq X$ be a basic neighborhood of $0$. Thus, there exist $\varepsilon
>0$, $n\in\mathbb{N}$ and $\varphi_{1},\ldots,\varphi_{n}\in X^{\ast
}\backslash\left\{  0\right\}  $, where $X^{\ast}$ is the topological dual of
$X$, such that%
\[
A=\left\{  x\in X:\left\vert \varphi_{i}\left(  x\right)  \right\vert
<\varepsilon\text{ for all }i=1,\ldots,n\right\}
\]
Note that $\left(
%TCIMACRO{\tbigcap \limits_{i=1}^{n}}%
%BeginExpansion
{\textstyle\bigcap\limits_{i=1}^{n}}
%EndExpansion
\ker\left(  \varphi_{i}\right)  \right)  \subset A$ and
\[
\dim\left(
%TCIMACRO{\tbigcap \limits_{i=1}^{n}}%
%BeginExpansion
{\textstyle\bigcap\limits_{i=1}^{n}}
%EndExpansion
\ker\left(  \varphi_{i}\right)  \right)  =\dim\left(  X\right)  \text{.}%
\]
Let $W$ be a subspace of $X$ such that $W\subset A$ and $\dim\left(  W\right)
=\alpha<\dim\left(  X\right)  $ and let us see that $W\subset\left(
%TCIMACRO{\tbigcap \limits_{i=1}^{n}}%
%BeginExpansion
{\textstyle\bigcap\limits_{i=1}^{n}}
%EndExpansion
\ker\left(  \varphi_{i}\right)  \right)  $. In fact, if there exists $x_{0}\in
W-\left(
%TCIMACRO{\tbigcap \limits_{i=1}^{n}}%
%BeginExpansion
{\textstyle\bigcap\limits_{i=1}^{n}}
%EndExpansion
\ker\left(  \varphi_{i}\right)  \right)  $, then $\varphi_{i}\left(
x_{0}\right)  \neq0$ for a certain $i$. Considering $\lambda:=\dfrac
{\varepsilon+1}{\left\vert \varphi_{i}\left(  x_{0}\right)  \right\vert }$,
since $\lambda x_{0}\in W$ and
\[
\left\vert \varphi_{i}\left(  \lambda x_{0}\right)  \right\vert =\varepsilon
+1>\varepsilon\text{,}%
\]
we have $\lambda x_{0}\notin A$, a contradiction. Therefore, $W\subset\left(
%TCIMACRO{\tbigcap \limits_{i=1}^{n}}%
%BeginExpansion
{\textstyle\bigcap\limits_{i=1}^{n}}
%EndExpansion
\ker\left(  \varphi_{i}\right)  \right)  $ and $A$ is $\left(  \alpha
,\dim\left(  X\right)  \right)  $-spaceable. In particular, $A$ is $\left(
1,\dim\left(  X\right)  \right)  $-spaceable. Since $%
%TCIMACRO{\tbigcap \limits_{i=1}^{n}}%
%BeginExpansion
{\textstyle\bigcap\limits_{i=1}^{n}}
%EndExpansion
\ker\left(  \varphi_{i}\right)  \subsetneq X$, let $y_{0}\in X\backslash
\left(
%TCIMACRO{\tbigcap \limits_{i=1}^{n}}%
%BeginExpansion
{\textstyle\bigcap\limits_{i=1}^{n}}
%EndExpansion
\ker\left(  \varphi_{i}\right)  \right)  $ and
\[
\delta:=\max_{1\leq i\leq n}\left\vert \varphi_{i}\left(  y_{0}\right)
\right\vert >0\text{.}%
\]
For $\alpha:=\left(  2\delta\right)  ^{-1}\varepsilon$, we have $\alpha
y_{0}\notin%
%TCIMACRO{\tbigcap \limits_{i=1}^{n}}%
%BeginExpansion
{\textstyle\bigcap\limits_{i=1}^{n}}
%EndExpansion
\ker\left(  \varphi_{i}\right)  $ and
\[
\left\vert \varphi_{i}\left(  \alpha y_{0}\right)  \right\vert =\alpha
\left\vert \varphi_{i}\left(  y_{0}\right)  \right\vert <\varepsilon\text{,}%
\]
and this shows that $\alpha y_{0}\in A$. But it is obvious that there is no
subspace of $X$ containing $\alpha y_{0}$ and contained in $A$. Therefore, $A$
is not pointwise spaceable.
\end{example}

Many results on lineability in sequence spaces invoke the \textquotedblleft
mother vector\textquotedblright\ technique, where a sequence $\left(
x_{k}\right)  _{k=1}^{\infty}$ (called mother vector) in a sequence space $E$
is used to generate an infinite-dimensional subspace of $E$ contained in a
certain set of sequences. However, in general, the generated subspace does not
contain the \textquotedblleft mother vector\textquotedblright\ and it is by no
means simple to construct the subspace containing the \textquotedblleft mother
vector\textquotedblright. Examples can be found in the papers \cite{FPT, PT}.
The concept of pointwise lineability/spaceability faces this technicality: to
generate a subspace containing the mother vector. Thus, the notion of
pointwise lineability/spaceability shall not be confused with the
\textquotedblleft mother vector\textquotedblright\ technique.

Recently, some results in this direction have been obtained covered under the
veil of $\left(  1,\beta\right)  $-lineability/spaceability. Analyzing the
proof of \cite[Theorem 1.3]{pre} we conclude that, in fact, we have pointwise
lineability/spaceability result.

In order to deal with a wide range of sequence spaces we shall use the notion
of invariant sequence spaces (see Definition \ref{Def3.1.1}) that was
introduced in \cite{BDFP}. For the sake of illustration, in \cite[Theorem
2.5]{BF} it is proved that if $E$ is an invariant sequence space over a Banach
space $X$, then

\begin{enumerate}
\item For every $\Gamma\subset\left(  0,\infty\right]  $, the set
\begin{equation}
E\backslash%
%TCIMACRO{\dbigcup \limits_{q\in\Gamma}}%
%BeginExpansion
{\displaystyle\bigcup\limits_{q\in\Gamma}}
%EndExpansion
\ell_{q}\left(  X\right)  \label{r44}%
\end{equation}
is either empty or spaceable;

\item For every $\Gamma\subset\left(  0,\infty\right]  $, the set
\begin{equation}
E\backslash%
%TCIMACRO{\dbigcup \limits_{q\in\Gamma}}%
%BeginExpansion
{\displaystyle\bigcup\limits_{q\in\Gamma}}
%EndExpansion
\ell_{q}^{w}\left(  X\right)  \label{s44}%
\end{equation}
is either empty or spaceable, where $\ell_{p}^{w}\left(  X\right)  $ is the
space of weakly $p$-summable sequences in $X$ (the exact definitions shall be
presented throughout the paper).
\end{enumerate}

Let us recall the notion of invariant sequence spaces.

\begin{definition}
\label{Def3.1.1}\emph{(See} \cite[Definition 2.1]{BDFP}\emph{)} Let
$X\neq\left\{  0\right\}  $ be a Banach space over $\mathbb{K}$.

\begin{enumerate}
\item[\emph{(a)}] Given $x=\left(  x_{j}\right)  _{j=1}^{\infty}\in
X^{\mathbb{N}}$, we define the zero-free version of $x$, that we denote by
$x^{0}$, as follows: if $x$ has only finitely many nonzero coordinates, then
$x^{0}=0$, otherwise, $x^{0}=\left(  x_{j_{k}}\right)  _{k=1}^{\infty}$ where
$x_{j_{k}}$ is the $k$-th nonzero coordinate of $x$.

\item[\emph{(b)}] An invariant sequence space over $X$ is an
infinite-dimensional Banach or quasi-Banach space $E$ whose elements are
$X$-valued sequences satisfying the following conditions:

\begin{enumerate}
\item[\emph{(b1)}] For all $x\in X^{\mathbb{N}}$ such that $x^{0}\neq0$,
\[
x\in E\Leftrightarrow x^{0}\in E
\]
and
\[
\left\Vert x\right\Vert \leq K\left\Vert x^{0}\right\Vert
\]
for some constant $K$ which depends only on $E$.

\item[\emph{(b2)}] $\left\Vert x_{j}\right\Vert _{X}\leq\left\Vert
x\right\Vert _{E}$ for all $x=\left(  x_{j}\right)  _{j=1}^{\infty}\in E$ and
all $j\in\mathbb{N}$.
\end{enumerate}

\noindent An invariant sequence space is an invariant sequence space over some
Banach space $X$.
\end{enumerate}
\end{definition}

Usual sequence spaces are invariant sequence spaces. For instance, if $X$ is a
Banach space, the classical sequence spaces $\ell_{p}\left(  X\right)  $,
$\ell_{p}^{w}\left(  X\right)  $, $\ell_{p}^{u}\left(  X\right)  $,
$c_{0}\left(  X\right)  $, $c\left(  X\right)  $, $p\in\left(  0,\infty
\right)  $ and the Lorentz space $\ell_{p,q}$ are invariant sequence spaces.
For more details and examples, we refer to \cite{BDFP}.

Let $X$ and $Y$ be Banach spaces, $\Gamma$ be an arbitrary set and $E$ be an
invariant sequence space over $X$. If $E_{\lambda}$ is an invariant sequence
space over $Y$ for each $\lambda\in\Gamma$, and $f\colon X\rightarrow Y$ is
any function, following \cite[Definition 1.3]{NP}, we define
\[
G\left(  E,f,\left(  E_{\lambda}\right)  _{\lambda\in\Gamma}\right)  =\left\{
\left(  x_{j}\right)  _{j=1}^{\infty}\in E:\left(  f\left(  x_{j}\right)
\right)  _{j=1}^{\infty}\notin%
%TCIMACRO{\dbigcup \limits_{\lambda\in\Gamma}}%
%BeginExpansion
{\displaystyle\bigcup\limits_{\lambda\in\Gamma}}
%EndExpansion
E_{\lambda}\right\}  \text{.}%
\]
According to \cite[Definition 2.3]{BF}, a function $f\colon X\rightarrow Y$
between normed spaces is:

\begin{enumerate}
\item[(a)] Non-contractive if $f\left(  0\right)  =0$ and for all scalars
$\alpha$ there is some constant $K\left(  \alpha\right)  >0$ such that%
\[
\left\Vert f\left(  \alpha x\right)  \right\Vert _{Y}\geq K\left(
\alpha\right)  \left\Vert f\left(  x\right)  \right\Vert _{Y}\text{,}%
\]
for all $x\in X$.

\item[(b)] Strongly non-contractive if $f\left(  0\right)  =0$ and, for all
scalars $\alpha$ there exists some constant $K\left(  \alpha\right)  >0$ such
that%
\[
\left\vert \varphi\left(  f\left(  \alpha x\right)  \right)  \right\vert \geq
K\left(  \alpha\right)  \left\vert \varphi\left(  f\left(  x\right)  \right)
\right\vert \text{,}%
\]
for all $x\in X$ and all continuous linear functionals $\varphi\in Y^{\prime}%
$. (From now on, $Y^{\prime}$ denotes the topological dual of $Y$.)
\end{enumerate}

An invariant sequence space $E$ over a Banach space $X$ is called strongly
invariant sequence space when%
\[
c_{00}\left(  X\right)  :=\left\{  \left(  x_{j}\right)  _{j=1}^{\infty}\in
c_{0}\left(  X\right)  :\left(  x_{j}\right)  _{j=1}^{\infty}\text{ is
eventually null}\right\}  \subset E
\]
and $\left(  x_{j}\right)  _{j=1}^{\infty}\in E$ if, and only if, all the
subsequences of $\left(  x_{j}\right)  _{j=1}^{\infty}$ also belong to $E$.

All the aforementioned examples of invariant sequence spaces are also strongly
invariant sequence spaces. However, the notions are not exactly the same. The
set%
\[
E=\left\{  \left(  x_{j}\right)  _{j=1}^{\infty}\in\ell_{1}:%
%TCIMACRO{\tsum \limits_{j=1}^{\infty}}%
%BeginExpansion
{\textstyle\sum\limits_{j=1}^{\infty}}
%EndExpansion
x_{j}=0\right\}  \text{,}%
\]
is the kernel of the continuous linear functional%
\begin{align*}
\varphi\colon\ell_{1}  &  \rightarrow\mathbb{K}\\
x=\left(  x_{j}\right)  _{j=1}^{\infty}  &  \mapsto\varphi\left(  x\right)  =%
%TCIMACRO{\tsum \limits_{j=1}^{\infty}}%
%BeginExpansion
{\textstyle\sum\limits_{j=1}^{\infty}}
%EndExpansion
x_{j}%
\end{align*}
and thus $E$ is a Banach space. It is plain that $\left\vert x_{j}\right\vert
\leq\left\Vert x\right\Vert $ for every $j$ and, if $x^{0}\neq0$ then $x\in
E\Leftrightarrow$ $x^{0}\in E$; it is also plain that $\left\Vert
x^{0}\right\Vert =\left\Vert x\right\Vert $. Hence $E$ is a invariant sequence
space. However, $E$ fails to be a strongly invariant sequence space. In fact,
note that%
\[
\left(  1,-1,2^{-1},-2^{-1},2^{-2},-2^{-2},\ldots,2^{-n},-2^{-n}%
,\ldots\right)
\]
lies in $E$ and, on the other hand, the subsequence
\[
\left(  1,2^{-1},2^{-2},\ldots,2^{-n},\ldots\right)
\]
does not lie in $E$ (it is also simple to check that $c_{00}$ is not contained
in $E$).

According to \cite[Definition 2.3]{NP}, if $X$ and $Y$ are Banach spaces and
$E$ is an invariant sequence space over $Y$, a function $f\colon X\rightarrow
Y$ such that $f\left(  0\right)  =0$ is called compatible with $E$ if for any
$X$-valued sequence $\left(  x_{j}\right)  _{j=1}^{\infty}$ and all scalars
$a\neq0$, we have%
\[
\left(  f\left(  x_{j}\right)  \right)  _{j=1}^{\infty}\notin E\Rightarrow
\left(  f\left(  ax_{j}\right)  \right)  _{j=1}^{\infty}\notin E\text{.}%
\]
For instance, it is easy to see that any non-contractive map $f\colon
X\rightarrow Y$ is compatible with $\ell_{q}\left(  Y\right)  $ and
$c_{0}\left(  Y\right)  $.

\section{Main results}

In this section we shall show that, in general, the results from \cite{BDFP,
BF, NP} also hold in the context of pointwise lineability/spaceability.

The following theorem can be found in \cite[Theorem 2.5]{NP} and it
generalizes \cite[Theorem 2.5(a)]{BF}:

\begin{theorem}
Let $X$ and $Y$ be Banach spaces, $\Gamma$ be an arbitrary set, $E$ be an
invariant sequence space over $X$ and $E_{\lambda}$ be a strongly invariant
sequence space over $Y$ for all $\lambda\in\Gamma$. If $f\colon X\rightarrow
Y$ is compatible with $E_{\lambda}$ for each $\lambda\in\Gamma$, then
$G\left(  E,f,\left(  E_{\lambda}\right)  _{\lambda\in\Gamma}\right)  $ is
either empty or $\mathfrak{c}$-spaceable.
\end{theorem}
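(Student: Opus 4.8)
The plan is to prove pointwise $\mathfrak{c}$-spaceability, which is stronger than the stated $\mathfrak{c}$-spaceability. Assume $G := G\left(E,f,\left(E_{\lambda}\right)_{\lambda\in\Gamma}\right)$ is nonempty and fix an arbitrary $x = \left(x_{j}\right)_{j=1}^{\infty} \in G$. I must construct a closed subspace $W_{x}$ with $\dim W_{x} = \mathfrak{c}$, $x \in W_{x}$, and $W_{x} \subset G \cup \left\{0\right\}$. Since $x \in G$, the sequence $\left(f\left(x_{j}\right)\right)_{j}$ fails to lie in any $E_{\lambda}$; in particular $x$ has infinitely many nonzero coordinates (otherwise $\left(f\left(x_{j}\right)\right)_{j}$ would be eventually null, hence in $c_{00}\left(Y\right) \subset E_{\lambda}$). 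This lets me split the index set $\mathbb{N}$ into infinitely many disjoint infinite sets $\left(N_{k}\right)_{k=0}^{\infty}$, each meeting the support of $x$ infinitely often.

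The key idea is a blocking construction. Partition $\mathbb{N} = \bigsqcup_{k=0}^{\infty} N_{k}$ into infinitely many infinite blocks, each of which contains infinitely many indices $j$ with $x_{j} \neq 0$. For each $k$, let $P_{k}\colon X^{\mathbb{N}} \to X^{\mathbb{N}}$ be the projection that keeps the coordinates indexed by $N_{k}$ and sets the rest to zero, and set $x^{(k)} := P_{k}(x)$. By the invariance property (b1), since the nonzero coordinates of $x^{(k)}$ form a subsequence of the nonzero coordinates of $x$, each $x^{(k)}$ lies in $E$ with $\left\Vert x^{(k)}\right\Vert \le K\left\Vert \left(x^{(k)}\right)^{0}\right\Vert$. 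Crucially, because $E_{\lambda}$ is \emph{strongly} invariant, $\left(f\left(x_{j}\right)\right)_{j} \notin E_{\lambda}$ forces its restriction to each block $N_{k}$, namely $\left(f\left(x^{(k)}_{j}\right)\right)_{j}$, to still fail membership in $E_{\lambda}$ for at least the blocks that carry the obstruction — and by choosing the blocks so each inherits infinitely many of the relevant coordinates, I arrange that $\left(f\left(P_{k}x_{j}\right)\right)_{j} \notin \bigcup_{\lambda} E_{\lambda}$ for every $k$, so each $x^{(k)} \in G$.

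Next I generate the subspace. The vectors $\left(x^{(k)}\right)_{k=0}^{\infty}$ have pairwise disjoint supports, so any nonzero (infinite) linear combination has a well-defined zero-free version built from the pieces. I then invoke the standard mother-vector machinery: using the $x^{(k)}$ as the starting data, form $\mathfrak{c}$-many vectors of the shape $\sum_{k} a_{k}\, x^{(k)}$ indexed by a family of scalar sequences of cardinality $\mathfrak{c}$ that is linearly independent in the appropriate sense, and take $W_{x}$ to be the closed span. Disjointness of supports together with condition (b2) gives control $\left\Vert \left(x^{(k)}_{j}\right)\right\Vert_{X} \le \left\Vert x^{(k)}\right\Vert_{E}$, which is what guarantees that limits of combinations stay genuinely $X$-valued sequences and that convergence in $E$ is preserved under passing to the nonzero-coordinate version via (b1). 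To ensure $x$ itself lies in $W_{x}$, I simply include the scalar sequence $\left(1,1,1,\ldots\right)$ in the indexing family, so that $\sum_{k} x^{(k)} = x$ belongs to $W_{x}$.

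The main obstacle is proving that \emph{every} nonzero element of $W_{x}$ lands in $G$, i.e.\ that the membership-failure $\left(f\left(y_{j}\right)\right)_{j} \notin \bigcup_{\lambda} E_{\lambda}$ survives taking linear combinations \emph{and} closure. For a finite combination $y = \sum_{k \in F} a_{k} x^{(k)}$ with some $a_{k_{0}} \neq 0$, compatibility of $f$ with $E_{\lambda}$ is exactly the tool: from $\left(f\left(x^{(k_{0})}_{j}\right)\right)_{j} \notin E_{\lambda}$ I deduce $\left(f\left(a_{k_{0}} x^{(k_{0})}_{j}\right)\right)_{j} \notin E_{\lambda}$, and since the supports are disjoint and $E_{\lambda}$ is strongly invariant, the restriction of $\left(f\left(y_{j}\right)\right)_{j}$ to block $N_{k_{0}}$ agrees with $\left(f\left(a_{k_{0}} x^{(k_{0})}_{j}\right)\right)_{j}$, so if $\left(f\left(y_{j}\right)\right)_{j}$ belonged to $E_{\lambda}$ then strong invariance (closure under subsequences) would put this restriction in $E_{\lambda}$, a contradiction. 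The delicate part is extending this to infinite combinations and to limits in $W_{x}$: here I will show that the closure of the finite-support combinations still consists only of vectors whose restriction to some single block $N_{k_{0}}$ equals a nonzero scalar multiple of $x^{(k_{0})}$ on that block, so the same compatibility-plus-strong-invariance argument applies pointwise on that block and defeats membership in $\bigcup_{\lambda} E_{\lambda}$.
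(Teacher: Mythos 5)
There is a genuine gap, and it sits exactly at the step you yourself flag as ``crucial''. You assert that the blocks $N_k$ can be chosen so that the restriction $\left(f\left(x_j\right)\right)_{j\in N_k}$ fails to belong to $\bigcup_{\lambda\in\Gamma}E_\lambda$ for \emph{every} $k$. Strong invariance only gives the implication you use later (a bad subsequence forces the whole sequence to be bad); here you need the converse direction --- that badness of the whole sequence can be distributed over infinitely many disjoint blocks --- and nothing in the hypotheses yields this. Indeed, if the family $\left(E_\lambda\right)_{\lambda\in\Gamma}$ is not nested, pick $\lambda_1,\lambda_2$ with $E_{\lambda_1}\not\subset E_{\lambda_2}$ and $E_{\lambda_2}\not\subset E_{\lambda_1}$, take $u\in E_{\lambda_1}\setminus E_{\lambda_2}$ and $v\in E_{\lambda_2}\setminus E_{\lambda_1}$, and consider a sequence whose $f$-image interleaves $u$ and $v$: the interleaved sequence lies in no $E_\lambda$ (membership would pass to its subsequences $u$ and $v$), yet its restriction to the odd block lies in $E_{\lambda_1}$ and to the even block in $E_{\lambda_2}$. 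So restrictions of elements of $G$ can perfectly well fall into $\bigcup_\lambda E_\lambda$, and whether the blocks can be re-chosen to avoid this depends on the particular spaces. In the paper this difficulty is precisely what the \emph{nested} hypothesis and Lemma \ref{Lema2.2} are for --- and that hypothesis is not part of the present statement; moreover, even Lemma \ref{Lema2.2} produces only \emph{one} good half of $\mathbb{N}$, not a partition into infinitely many good blocks. The statement at hand needs no splitting at all: as in \cite{NP} (and as in the paper's proof of Theorem \ref{Teo1} for the vectors $y_i$, $i\geq2$), one places a \emph{full copy} of $x$ on each block, $y_i=\sum_{k}x_k\otimes e_{i_k}$; then $\left[\left(f\left(y^{(i)}_m\right)\right)_{m=1}^{\infty}\right]^0=\left[\left(f\left(x_j\right)\right)_{j=1}^{\infty}\right]^0\notin\bigcup_{\lambda}E_\lambda$, so each $y_i\in G$ automatically, and for a combination with $a_p\neq0$ the restriction to the $p$-th block is $\left(f\left(a_p x_m\right)\right)_{m=1}^{\infty}$, which compatibility (applied to the whole sequence $\left(f\left(x_m\right)\right)_{m=1}^{\infty}\notin E_\lambda$) keeps outside every $E_\lambda$.

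There is a second gap in the pointwise part of your plan. Even granting that every piece $x^{(k)}=P_k x$ lies in $G$, ``including the scalar sequence $(1,1,1,\ldots)$'' does not put $x$ in the closed span of the $x^{(k)}$: these vectors have pairwise disjoint supports, and the series $\sum_k x^{(k)}$ need not converge in $E$. For instance, if $E=\ell_\infty$ and $x=(1,1,1,\ldots)$, every finite linear combination of the pieces is at $\ell_\infty$-distance at least $1$ from $x$, so $x$ does not belong to the closed linear span of the pieces. This is exactly why the paper does not cut $x$ into pieces but instead keeps $y_1=x$ itself as a generator, overlapping all the other blocks, and defines the embedding on $\ell_{\tilde s}$, where convergence of $\sum_i a_i y_i$ follows from Definition \ref{Def3.1.1}(b1). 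Finally, note that the statement you were asked to prove requires only $\mathfrak{c}$-spaceability; the pointwise strengthening you attempt is the paper's Theorem \ref{Teo1}, and it carries the nested hypothesis precisely because of the first gap above.
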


The main result of this subsection shows that if $\left(  E_{\lambda}\right)
_{\lambda\in\Gamma}$ is \textquotedblleft nested\textquotedblright, that is,
given $\lambda_{1},\lambda_{2}\in\Gamma$, either $E_{\lambda_{1}}\subset
E_{\lambda_{2}}$ or $E_{\lambda_{1}}\subset E_{\lambda_{2}}$, then we can
assure pointwise spaceability in the above result. We need the following
simple lemma for further reference.

\begin{lemma}
\label{Lema2.2}Let $\left(  E_{\lambda}\right)  _{\lambda\in\Gamma}$ be a
family of nested strongly invariant sequence spaces and let $\mathbb{N}%
^{\prime}\subset\mathbb{N}$ be such that
\[
\operatorname*{card}\left(  \mathbb{N}^{\prime}\right)  =\operatorname*{card}%
\left(  \mathbb{N}\backslash\mathbb{N}^{\prime}\right)  =\aleph_{0}\text{.}%
\]
If $x=\left(  x_{j}\right)  _{j=1}^{\infty}\in X^{\mathbb{N}}$ and $x\notin%
%TCIMACRO{\dbigcup \limits_{\lambda\in\Gamma}}%
%BeginExpansion
{\displaystyle\bigcup\limits_{\lambda\in\Gamma}}
%EndExpansion
E_{\lambda}$, then either $\left(  x_{j}\right)  _{j\in\mathbb{N}^{\prime}%
}\notin%
%TCIMACRO{\dbigcup \limits_{\lambda\in\Gamma}}%
%BeginExpansion
{\displaystyle\bigcup\limits_{\lambda\in\Gamma}}
%EndExpansion
E_{\lambda}$ or $\left(  x_{j}\right)  _{j\in\mathbb{N}\backslash
\mathbb{N}^{\prime}}\notin%
%TCIMACRO{\dbigcup \limits_{\lambda\in\Gamma}}%
%BeginExpansion
{\displaystyle\bigcup\limits_{\lambda\in\Gamma}}
%EndExpansion
E_{\lambda}$.
\end{lemma}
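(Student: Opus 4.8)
The plan is to prove the contrapositive: assuming that both subsequences $(x_j)_{j\in\mathbb{N}'}$ and $(x_j)_{j\in\mathbb{N}\backslash\mathbb{N}'}$ belong to $\bigcup_{\lambda\in\Gamma}E_\lambda$, I will show that $x$ itself belongs to $\bigcup_{\lambda\in\Gamma}E_\lambda$. The cardinality hypothesis guarantees that $\mathbb{N}'$ and $\mathbb{N}\backslash\mathbb{N}'$ are both infinite, so the two subsequences are genuine elements of $X^{\mathbb{N}}$ and it makes sense to ask about their membership in the $E_\lambda$. Fix then $\lambda_1,\lambda_2\in\Gamma$ with $(x_j)_{j\in\mathbb{N}'}\in E_{\lambda_1}$ and $(x_j)_{j\in\mathbb{N}\backslash\mathbb{N}'}\in E_{\lambda_2}$. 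By the nestedness hypothesis one of these spaces contains the other; relabelling if necessary, I may assume $E_{\lambda_1}\subset E_{\lambda_2}=:E_0$, so that both subsequences lie in the single space $E_0$.

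First I would reassemble each subsequence into a full sequence on $\mathbb{N}$ by padding with zeros: let $a$ be the sequence with $a_j=x_j$ for $j\in\mathbb{N}'$ and $a_j=0$ otherwise, and let $b$ be defined analogously using $\mathbb{N}\backslash\mathbb{N}'$. The whole point of this reformulation is that $x=a+b$ coordinatewise, so once I know $a,b\in E_0$ the conclusion $x\in E_0\subset\bigcup_{\lambda\in\Gamma}E_\lambda$ follows at once, since $E_0$ is a vector space.

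The crux is therefore to show $a\in E_0$ (and, symmetrically, $b\in E_0$). Here I would invoke the zero-free version: by construction the nonzero coordinates of $a$ are, in order, exactly the nonzero coordinates of $(x_j)_{j\in\mathbb{N}'}$, so $a^0=\bigl((x_j)_{j\in\mathbb{N}'}\bigr)^0$. If this common zero-free version is nonzero, then condition (b1) in the definition of invariant sequence space yields the chain $(x_j)_{j\in\mathbb{N}'}\in E_0\Leftrightarrow a^0\in E_0\Leftrightarrow a\in E_0$, and since $(x_j)_{j\in\mathbb{N}'}\in E_0$ by assumption, we obtain $a\in E_0$. If instead $a^0=0$, then $a$ has only finitely many nonzero coordinates, whence $a\in c_{00}(X)\subset E_0$, where the inclusion is exactly the feature that distinguishes a \emph{strongly} invariant sequence space. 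Either way $a\in E_0$, and the same argument gives $b\in E_0$.

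The main obstacle to anticipate is that the seemingly natural route—trying to deduce membership of the shuffle $x$ directly from the ``all subsequences belong to $E_0$'' characterization of strong invariance—is essentially circular, because an arbitrary subsequence of $x$ interleaves indices from $\mathbb{N}'$ and $\mathbb{N}\backslash\mathbb{N}'$ and is in general a subsequence of neither half. Rerouting through the zero-padding trick, which turns the shuffle into a vector-space sum $a+b$, is what unblocks the argument, and it is precisely here that all three hypotheses enter: nestedness to collapse the two halves into one space $E_0$, condition (b1) to pass freely between a subsequence and its zero-padded extension, and the inclusion $c_{00}(X)\subset E_0$ to handle the degenerate case $a^0=0$.
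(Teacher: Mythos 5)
Your proposal is correct and follows essentially the same route as the paper's proof: zero-padding the two subsequences into sequences $u$ and $v$ (your $a$ and $b$) supported on $\mathbb{N}'$ and $\mathbb{N}\backslash\mathbb{N}'$, using nestedness to place both in a single space, invoking condition (b1) when the zero-free version is nonzero and the inclusion $c_{00}(X)\subset E_{\lambda_2}$ when it vanishes, and concluding via $x=u+v$. The only difference is presentational (you argue by contrapositive, the paper by contradiction), which is logically the same argument.
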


\begin{proof}
Considering the subsequences $\left(  x_{j}\right)  _{j\in\mathbb{N}^{\prime}%
}$, $\left(  x_{j}\right)  _{j\in\mathbb{N}\backslash\mathbb{N}^{\prime}}$,
let us assume that
\[
\left(  x_{j}\right)  _{j\in\mathbb{N}^{\prime}}\in%
%TCIMACRO{\dbigcup \limits_{\lambda\in\Gamma}}%
%BeginExpansion
{\displaystyle\bigcup\limits_{\lambda\in\Gamma}}
%EndExpansion
E_{\lambda}\text{ and }\left(  x_{j}\right)  _{j\in\mathbb{N}\backslash
\mathbb{N}^{\prime}}\in%
%TCIMACRO{\dbigcup \limits_{\lambda\in\Gamma}}%
%BeginExpansion
{\displaystyle\bigcup\limits_{\lambda\in\Gamma}}
%EndExpansion
E_{\lambda}\text{.}%
\]
Thus, there are $\lambda_{1},\lambda_{2}\in\Gamma$ such that $\left(
x_{j}\right)  _{j\in\mathbb{N}^{\prime}}\in E_{\lambda_{1}}$ and $\left(
x_{j}\right)  _{j\in\mathbb{N}\backslash\mathbb{N}^{\prime}}\in E_{\lambda
_{2}}$ and, without loss of generality, assuming $E_{\lambda_{1}}\subset
E_{\lambda_{2}}$, we have that both $\left(  x_{j}\right)  _{j\in
\mathbb{N}^{\prime}}$ and $\left(  x_{j}\right)  _{j\in\mathbb{N}%
\backslash\mathbb{N}^{\prime}}$ belong to $E_{\lambda_{2}}$. Let us take the
sequences $u=\left(  u_{j}\right)  _{j=1}^{\infty}$ and $v=\left(
v_{j}\right)  _{j=1}^{\infty}$ given by
\[
u_{j}=\left\{
\begin{array}
[c]{ll}%
0\text{,} & \text{\hspace{-0.25cm}if }j\in\mathbb{N}\backslash\mathbb{N}%
^{\prime}\text{,}\\
x_{j}\text{,} & \text{\hspace{-0.25cm}if }j\in\mathbb{N}^{\prime}\text{,}%
\end{array}
\right.  \text{ \ \ \ \ and \ \ \ \ }v_{j}=\left\{
\begin{array}
[c]{ll}%
x_{j}\text{,} & \text{\hspace{-0.25cm}if }j\in\mathbb{N}\backslash
\mathbb{N}^{\prime}\text{,}\\
0\text{,} & \text{\hspace{-0.25cm}if }j\in\mathbb{N}^{\prime}\text{.}%
\end{array}
\right.
\]
We clearly see that $u^{0}=\left[  \left(  x_{j}\right)  _{j\in\mathbb{N}%
^{\prime}}\right]  ^{0}$ and $v^{0}=\left[  \left(  x_{j}\right)
_{j\in\mathbb{N}\backslash\mathbb{N}^{\prime}}\right]  ^{0}$. If $u^{0}=0$,
then $u\in c_{00}\left(  X\right)  \subset E_{\lambda_{2}}$ because
$E_{\lambda_{2}}$ is a strongly invariant sequence space. If $u^{0}\neq0$,
then $u\in E_{\lambda_{2}}$ because, in particular, $E_{\lambda_{2}}$ is an
invariant sequence space. By the same reason we have that $v\in E_{\lambda
_{2}}$. Now, since
\[
x=u+v\text{,}%
\]
and $E_{\lambda_{2}}$ is a vector space, we conclude that%
\[
x\in E_{\lambda_{2}}\subset%
%TCIMACRO{\dbigcup \limits_{\lambda\in\Gamma}}%
%BeginExpansion
{\displaystyle\bigcup\limits_{\lambda\in\Gamma}}
%EndExpansion
E_{\lambda}\text{,}%
\]
contradicting the fact that $x\notin%
%TCIMACRO{\dbigcup \limits_{\lambda\in\Gamma}}%
%BeginExpansion
{\displaystyle\bigcup\limits_{\lambda\in\Gamma}}
%EndExpansion
E_{\lambda}$.
\end{proof}

In order to fix some notation, if $X$ is a linear space, $x_{k}\in X$ for all
$k\in\mathbb{N}$ and $\left\{  i_{1}<i_{2}<\cdots\right\}  $ is a subset of
$\mathbb{N}$, we denote by
\[%
%TCIMACRO{\tsum \limits_{k=1}^{\infty}}%
%BeginExpansion
{\textstyle\sum\limits_{k=1}^{\infty}}
%EndExpansion
x_{k}\otimes e_{i_{k}}%
\]
the $X$-valued sequence having the $i_{k}$-th coordinate equal to $x_{k}$ and
all the other coordinates are zero.

\begin{theorem}
\label{Teo1}Let $X$ and $Y$ be Banach spaces, $\Gamma$ be an arbitrary set,
$E$ be an invariant sequence space over $X$ and $E_{\lambda}$ be a strongly
invariant sequence space over $Y$ for each $\lambda\in\Gamma$. If $f\colon
X\rightarrow Y$ is compatible with $E_{\lambda}$ for all $\lambda\in\Gamma$
and $\left(  E_{\lambda}\right)  _{\lambda\in\Gamma}$ is nested, then
$G\left(  E,f,\left(  E_{\lambda}\right)  _{\lambda\in\Gamma}\right)  $ is
either empty or pointwise $\mathfrak{c}$-spaceable.
\end{theorem}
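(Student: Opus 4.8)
The plan is to fix an arbitrary $x\in G:=G(E,f,(E_{\lambda})_{\lambda\in\Gamma})$ (assuming $G\neq\emptyset$; the case $\Gamma=\emptyset$ is trivial) and to produce a closed $\mathfrak{c}$-dimensional subspace $W_{x}$ with $x\in W_{x}\subset G\cup\{0\}$. Two elementary facts will be used repeatedly. First, $G$ is a \emph{cone}: if $w\in G$ and $a\neq0$ then $aw\in E$ and, since $f$ is compatible with each $E_{\lambda}$, the relation $(f(w_{j}))_{j}\notin E_{\lambda}$ gives $(f(aw_{j}))_{j}\notin E_{\lambda}$ for every $\lambda$, so $aw\in G$. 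Second, because every $E_{\lambda}$ contains $c_{00}(Y)$, the sequence $x$ (hence its zero-free version $x^{0}$) has infinitely many nonzero coordinates, as otherwise $(f(x_{j}))_{j}$ would be finitely supported and would lie in some $E_{\lambda}$. For brevity I call a $Y$-valued sequence \emph{bad} if it lies outside $\bigcup_{\lambda}E_{\lambda}$.

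The idea is to separate coordinates into an \emph{anchor} and a \emph{reservoir}. Fix a partition $\mathbb{N}=A_{0}\sqcup A_{1}$ with $\operatorname*{card}(A_{0})=\operatorname*{card}(A_{1})=\aleph_{0}$. Applying Lemma \ref{Lema2.2} to the bad sequence $(f(x_{j}))_{j}$ and relabelling if needed, I may assume that $(f(x_{j}))_{j\in A_{0}}$ is bad; this is the only point where the nestedness of $(E_{\lambda})_{\lambda\in\Gamma}$ enters. Here $A_{0}$ is the anchor and $A_{1}$ the reservoir.

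On the reservoir I would call upon the known spaceability theorem. Let $E_{A_{1}}$ be the set of sequences in $E$ that vanish off $A_{1}$; after identifying $A_{1}$ with $\mathbb{N}$ via its increasing enumeration, one checks that $E_{A_{1}}$ is again an invariant sequence space over $X$ --- it is the intersection of the kernels of the coordinate functionals indexed by $\mathbb{N}\setminus A_{1}$ (continuous by condition (b2)), hence a closed subspace of $E$, it is stable under the zero-free operation, and it is infinite-dimensional because the spreads of $x^{0}$ along infinitely many pairwise disjoint infinite subsets of $A_{1}$ are linearly independent. The $f$-image of any such spread of $x^{0}$ is bad (it has the same nonzero entries as $(f(x_{j}))_{j}$, so the same zero-free version, and $E_{\lambda}$ respects that version), whence $G(E_{A_{1}},f,(E_{\lambda})_{\lambda\in\Gamma})\neq\emptyset$, and \cite[Theorem 2.5]{NP} shows it is $\mathfrak{c}$-spaceable. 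Transporting back, this produces a closed subspace $Z\subset E$ with $\dim Z=\mathfrak{c}$, all of whose elements are supported on $A_{1}$ and which satisfies $Z\setminus\{0\}\subset G$.

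Finally I would put $W_{x}:=\mathbb{K}x\oplus Z$. Since $x$ has a nonzero coordinate in $A_{0}$ while every element of $Z$ vanishes there, the sum is direct and $x\notin Z$; thus $W_{x}$ is closed (a one-dimensional enlargement of the closed subspace $Z$), $\dim W_{x}=\mathfrak{c}$, and $x\in W_{x}$. To conclude, take $w=ax+z\neq0$ with $z\in Z$. If $a=0$ then $w=z\in Z\setminus\{0\}\subset G$. If $a\neq0$, then $w_{j}=ax_{j}$ for every $j\in A_{0}$, so $(f(w_{j}))_{j\in A_{0}}=(f(ax_{j}))_{j\in A_{0}}$; applying compatibility to the restricted sequence $(x_{j})_{j\in A_{0}}$ shows this lies in no $E_{\lambda}$, and since each $E_{\lambda}$ is strongly invariant, membership of the full sequence $(f(w_{j}))_{j}$ in some $E_{\lambda}$ would force its subsequence $(f(w_{j}))_{j\in A_{0}}$ into $E_{\lambda}$ --- impossible. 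As $w\in E$, we get $w\in G$. The crux, and the expected main obstacle, is exactly this anchoring: the nonlinearity of $f$ rules out any direct estimate of $(f(w_{j}))_{j}$ for a combination, so badness must be forced through the block $A_{0}$ on which $w$ is a scalar multiple of $x$ and which is kept disjoint from $Z$; that a suitable bad anchor can always be extracted from $x$ is precisely the content of Lemma \ref{Lema2.2}, and hence of the nestedness hypothesis.
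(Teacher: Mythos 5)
Your overall architecture is sound and genuinely different from the paper's. The paper puts $x$ itself into the generating family ($y_{1}=x$, plus spreads $y_{i}$ of $x$ onto disjoint infinite subsets of $\mathbb{N}\setminus\mathbb{N}_{1}$ for $i\geq2$) and then must carry out a delicate coordinatewise analysis of arbitrary limits of combinations $\sum_{i}a_{i}y_{i}$ to recover the coefficients $a_{i}$ and split into the cases $a_{1}\neq0$ and $a_{1}=0$. Your direct-sum device $W_{x}=\mathbb{K}x\oplus Z$, with $Z$ supported on the reservoir $A_{1}$ and $x$ anchored on $A_{0}$, eliminates that analysis: closedness of $W_{x}$ is automatic (closed subspace plus a one-dimensional one), and your verification that $ax+z\in G$ for $a\neq0$ (compatibility applied on $A_{0}$, then strong invariance to pass from the subsequence to the full sequence) is exactly the mechanism the paper uses in its case $a_{1}\neq0$. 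The use of Lemma \ref{Lema2.2} as the sole entry point of nestedness is also faithful to the paper.

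There is, however, one genuine gap: the claim, needed to invoke \cite[Theorem 2.5]{NP} as a black box, that $E_{A_{1}}$ (the sequences of $E$ vanishing off $A_{1}$, reindexed by $\mathbb{N}$) \emph{is again an invariant sequence space over} $X$. The membership part of (b1), condition (b2), and completeness do transfer, but the norm inequality in Definition \ref{Def3.1.1}(b1) does not. For $w\in E$ supported on $A_{1}$, what must be proved is $\left\Vert w\right\Vert _{E}\leq K^{\prime}\left\Vert \widetilde{w}\right\Vert _{E}$, where $\widetilde{w}$ is the element of $E$ obtained by placing the nonzero coordinates of $w$, in order, at the \emph{first elements of} $A_{1}$ (this is what "$w^{0}$" means inside $E_{A_{1}}$). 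The axioms only give the one-sided estimates $\left\Vert w\right\Vert _{E}\leq K\left\Vert w^{0}\right\Vert _{E}$ and $\left\Vert \widetilde{w}\right\Vert _{E}\leq K\left\Vert w^{0}\right\Vert _{E}$, where $w^{0}$ is the compression to the initial positions of $\mathbb{N}$; nothing in Definition \ref{Def3.1.1} bounds $\left\Vert w^{0}\right\Vert _{E}$ by a multiple of $\left\Vert \widetilde{w}\right\Vert _{E}$ (the definition is deliberately one-sided), so the check you assert cannot be completed in general. The repair is to drop the black box and run the spread construction directly inside $E$: take spreads $y_{i}$ of $x$ onto countably many pairwise disjoint infinite subsets of $A_{1}$; then (b1) for $E$ itself gives $\left\Vert y_{i}\right\Vert _{E}\leq K\left\Vert x^{0}\right\Vert _{E}$, so $\sum_{i}a_{i}y_{i}$ converges for $\left(  a_{i}\right)  \in\ell_{\tilde{s}}$, and a coordinatewise argument (as in the paper, but simpler because $x$ is absent from this family) shows the closure $Z$ of their span is supported on $A_{1}$ and lies in $G\cup\left\{  0\right\}  $; with this $Z$ your final step goes through verbatim. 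A last, shared, caveat: both you and the paper implicitly assume $\Gamma\neq\emptyset$ when deducing that $x$ has infinitely many nonzero coordinates, and your disclaimer that $\Gamma=\emptyset$ is "trivial" is not quite trivial for a general quasi-Banach $E$.
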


\begin{proof}
Let us assume that $G\left(  E,f,\left(  E_{\lambda}\right)  _{\lambda
\in\Gamma}\right)  $ is non-empty and consider
\[
x=\left(  x_{j}\right)  _{j=1}^{\infty}\in G\left(  E,f,\left(  E_{\lambda
}\right)  _{\lambda\in\Gamma}\right)  \text{.}%
\]
Notice that if $\mathbb{N}_{x}=\left\{  j\in\mathbb{N}:x_{j}\neq0\right\}  $,
then $\operatorname*{card}\left(  \mathbb{N}_{x}\right)  =\aleph_{0}$ because
$f\left(  0\right)  =0$ and $c_{00}\left(  Y\right)  \subset E_{\lambda}$ for
each $\lambda\in\Gamma$. So, we can infer that $x^{0}\neq0$. First, we have to
show that%
\[
x^{0}\in G\left(  E,f,\left(  E_{\lambda}\right)  _{\lambda\in\Gamma}\right)
\text{.}%
\]
We know that $\left(  f\left(  x_{j}\right)  \right)  _{j=1}^{\infty}\notin%
%TCIMACRO{\dbigcup \limits_{\lambda\in\Gamma}}%
%BeginExpansion
{\displaystyle\bigcup\limits_{\lambda\in\Gamma}}
%EndExpansion
E_{\lambda}$ and, thus, $\left[  \left(  f\left(  x_{j}\right)  \right)
_{j=1}^{\infty}\right]  ^{0}\notin%
%TCIMACRO{\dbigcup \limits_{\lambda\in\Gamma}}%
%BeginExpansion
{\displaystyle\bigcup\limits_{\lambda\in\Gamma}}
%EndExpansion
E_{\lambda}$ since, for each $\lambda\in\Gamma$, $E_{\lambda}$ is an strongly
invariant sequence space. Let us denote $x^{0}=\left(  x_{j_{k}}\right)
_{k=1}^{\infty}$, where $x_{j_{k}}$ is the $k$-th non-null coordinate of $x$.
Hence, we have to show that $\left(  f\left(  x_{j_{k}}\right)  \right)
_{k=1}^{\infty}\notin%
%TCIMACRO{\dbigcup \limits_{\lambda\in\Gamma}}%
%BeginExpansion
{\displaystyle\bigcup\limits_{\lambda\in\Gamma}}
%EndExpansion
E_{\lambda}$. Since $f\left(  0\right)  =0$, it follows that
\[
\left[  \left(  f\left(  x_{j_{k}}\right)  \right)  _{k=1}^{\infty}\right]
^{0}=\left[  \left(  f\left(  x_{j}\right)  \right)  _{j=1}^{\infty}\right]
^{0}\notin%
%TCIMACRO{\dbigcup \limits_{\lambda\in\Gamma}}%
%BeginExpansion
{\displaystyle\bigcup\limits_{\lambda\in\Gamma}}
%EndExpansion
E_{\lambda}\text{.}%
\]
Therefore, $\left(  f\left(  x_{j_{k}}\right)  \right)  _{k=1}^{\infty}\notin%
%TCIMACRO{\dbigcup \limits_{\lambda\in\Gamma}}%
%BeginExpansion
{\displaystyle\bigcup\limits_{\lambda\in\Gamma}}
%EndExpansion
E_{\lambda}$ and so $x^{0}\in G\left(  E,f,\left(  E_{\lambda}\right)
_{\lambda\in\Gamma}\right)  $.

It follows from Lemma \ref{Lema2.2} that exists $\mathbb{N}_{1}\subset
\mathbb{N}$ such that
\[
\operatorname*{card}\left(  \mathbb{N}_{1}\right)  =\operatorname*{card}%
\left(  \mathbb{N}\backslash\mathbb{N}_{1}\right)  =\aleph_{0}%
\]
and
\[
\left(  f\left(  x_{j}\right)  \right)  _{j\in\mathbb{N}_{1}}\notin%
%TCIMACRO{\dbigcup \limits_{\lambda\in\Gamma}}%
%BeginExpansion
{\displaystyle\bigcup\limits_{\lambda\in\Gamma}}
%EndExpansion
E_{\lambda}\text{.}%
\]
Let us consider some sequence $\left(  \mathbb{N}_{i}\right)  _{i=2}^{\infty}$
of countable and pairwise disjoint subsets of $\mathbb{N}$ such that
\[
\mathbb{N}\backslash\mathbb{N}_{1}\mathbb{=}%
%TCIMACRO{\dbigcup \limits_{i=2}^{\infty}}%
%BeginExpansion
{\displaystyle\bigcup\limits_{i=2}^{\infty}}
%EndExpansion
\mathbb{N}_{i}.
\]
Denoting
\[
\mathbb{N}_{i}=\left\{  i_{1}<i_{2}<i_{3}<\cdots\right\}  \text{,}%
\]
for $i=1,2\ldots$, let us consider the sequence $\left(  y_{i}\right)
_{i=1}^{\infty}$ defined by%
\[
y_{1}=x=%
%TCIMACRO{\tsum \limits_{k=1}^{\infty}}%
%BeginExpansion
{\textstyle\sum\limits_{k=1}^{\infty}}
%EndExpansion
x_{k}\otimes e_{k}%
\]
and, whenever $i\geq2$,%
\[
y_{i}=%
%TCIMACRO{\tsum \limits_{k=1}^{\infty}}%
%BeginExpansion
{\textstyle\sum\limits_{k=1}^{\infty}}
%EndExpansion
x_{k}\otimes e_{i_{k}}\text{.}%
\]
Notice that $y_{i}^{0}=x^{0}$ and hence $0\neq y_{i}^{0}\in E$ for all $i$.
Since $E$ is an invariant sequence space, it follows that $y_{i}\in E$ for all
$i\in\mathbb{N}$. Also note that the set $\left\{  y_{i}:i\in\mathbb{N}%
\right\}  $ is linearly independent. In fact, let $\lambda_{1},\ldots
,\lambda_{k}\in\mathbb{K}$ be such that%
\[
\lambda_{1}y_{1}+\lambda_{2}y_{2}+\cdots+\lambda_{k}y_{k}=0
\]
We can see that, for all $m\in\mathbb{N}$, the $1_{m}$-th coordinate of
\[
\lambda_{1}y_{1}+\lambda_{2}y_{2}+\cdots+\lambda_{k}y_{k}%
\]
is
\[
\lambda_{1}x_{1_{m}}=0\text{.}%
\]
and, since $m$ can be chosen satisfying $x_{1_{m}}\neq0$, we have $\lambda
_{1}=0$. It follows that
\[
\lambda_{2}y_{2}+\cdots+\lambda_{k}y_{k}=0\text{.}%
\]
Let $m\in\mathbb{N}$ be such that $x_{m}\neq0$. For all $i=2,\ldots,k$, the
$i_{m}$-th coordinate of
\[
\lambda_{2}y_{2}+\cdots+\lambda_{k}y_{k}%
\]
is%
\[
\lambda_{i}x_{m}=0
\]
and so $\lambda_{2}=\cdots=\lambda_{k}=0$. Furthermore, $y_{i}\in G\left(
E,f,\left(  E_{\lambda}\right)  _{\lambda\in\Gamma}\right)  $. Indeed, since
$y_{i}^{0}=x^{0}$, denoting $y_{i}=\left(  y_{m}^{\left(  i\right)  }\right)
_{m=1}^{\infty}$, we have%
\[
\left[  \left(  f\left(  y_{m}^{\left(  i\right)  }\right)  \right)
_{m=1}^{\infty}\right]  ^{0}=\left[  \left(  f\left(  x_{j}\right)  \right)
_{j=1}^{\infty}\right]  ^{0}\notin E_{\lambda}\text{,}%
\]
for all $i\in\mathbb{N}$ and all $\lambda\in\Gamma$. Let $K$ be the constant
of Definition \ref{Def3.1.1}(b1) and let us take $\tilde{s}=1$ if $E$ is a
Banach space and $\tilde{s}=s$ if $E$ is a quasi-Banach space, $0<s<1$. For
each $\left(  a_{i}\right)  _{i=1}^{\infty}\in\ell_{\tilde{s}}$,%
\begin{align*}%
%TCIMACRO{\tsum \limits_{i=1}^{\infty}}%
%BeginExpansion
{\textstyle\sum\limits_{i=1}^{\infty}}
%EndExpansion
\left\Vert a_{i}y_{i}\right\Vert _{E}^{\tilde{s}}  &  =%
%TCIMACRO{\tsum \limits_{i=1}^{\infty}}%
%BeginExpansion
{\textstyle\sum\limits_{i=1}^{\infty}}
%EndExpansion
\left\vert a_{i}\right\vert ^{\tilde{s}}\cdot\left\Vert y_{i}\right\Vert
_{E}^{\tilde{s}}\leq K^{\tilde{s}}%
%TCIMACRO{\tsum \limits_{i=1}^{\infty}}%
%BeginExpansion
{\textstyle\sum\limits_{i=1}^{\infty}}
%EndExpansion
\left\vert a_{i}\right\vert ^{\tilde{s}}\cdot\left\Vert y_{i}^{0}\right\Vert
_{E}^{\tilde{s}}\\
&  =K^{\tilde{s}}\left\Vert x^{0}\right\Vert _{E}^{\tilde{s}}%
%TCIMACRO{\tsum \limits_{i=1}^{\infty}}%
%BeginExpansion
{\textstyle\sum\limits_{i=1}^{\infty}}
%EndExpansion
\left\vert a_{i}\right\vert ^{\tilde{s}}<\infty\text{.}%
\end{align*}
Hence, $%
%TCIMACRO{\tsum \limits_{i=1}^{\infty}}%
%BeginExpansion
{\textstyle\sum\limits_{i=1}^{\infty}}
%EndExpansion
\left\Vert a_{i}y_{i}\right\Vert _{E}^{\tilde{s}}<\infty$ and, in both cases,
we conclude that $%
%TCIMACRO{\tsum \limits_{i=1}^{\infty}}%
%BeginExpansion
{\textstyle\sum\limits_{i=1}^{\infty}}
%EndExpansion
a_{i}y_{i}$ converges in $E$. Therefore, the operator
\begin{align*}
T\colon\ell_{\tilde{s}}  &  \rightarrow E\\
\left(  a_{k}\right)  _{k=1}^{\infty}  &  \mapsto T\left(  \left(
a_{k}\right)  _{k=1}^{\infty}\right)  =%
%TCIMACRO{\tsum \limits_{k=1}^{\infty}}%
%BeginExpansion
{\textstyle\sum\limits_{k=1}^{\infty}}
%EndExpansion
a_{k}y_{k}%
\end{align*}
is well-defined and linear. Let us see that $T$ is injective. In fact, let
$\left(  \mu_{n}\right)  _{n=1}^{\infty}\in\ell_{\tilde{s}}$ be such that%
\[
T\left(  \left(  \mu_{k}\right)  _{k=1}^{\infty}\right)  =0\text{.}%
\]
Since $f\left(  0\right)  =0$ and $\left(  f\left(  x_{1_{k}}\right)  \right)
_{k=1}^{\infty}\notin%
%TCIMACRO{\dbigcup \limits_{\lambda\in\Gamma}}%
%BeginExpansion
{\displaystyle\bigcup\limits_{\lambda\in\Gamma}}
%EndExpansion
E_{\lambda}$, there is $m\in\mathbb{N}$ such that $x_{1_{m}}\neq0$; but the
$1_{m}$-th coordinate of $T\left(  \left(  \mu_{k}\right)  _{k=1}^{\infty
}\right)  $ is $\mu_{1}x_{1_{m}}$ and we conclude that $\mu_{1}=0$. Now, if we
fix $p\in\mathbb{N}$ such that $x_{p}\neq0$, we have, for $i\geq2$, that the
$i_{p}$-th coordinate of $T\left(  \left(  \mu_{k}\right)  _{k=1}^{\infty
}\right)  $ is $\mu_{i}x_{p}$ and we conclude that $\mu_{i}=0$ for all
$i\geq2$.

Recalling that $x=y_{1}\in T\left(  \ell_{\tilde{s}}\right)  $, let us show
that $\overline{T\left(  \ell_{\tilde{s}}\right)  }\subset\left(  G\left(
E,f,\left(  E_{\lambda}\right)  _{\lambda\in\Gamma}\right)  \cup\left\{
0\right\}  \right)  $. In order to do this, we will check that, if $z=\left(
z_{n}\right)  _{n=1}^{\infty}\in\overline{T\left(  \ell_{\tilde{s}}\right)  }$
is a non-zero sequence, then $\left(  f\left(  z_{n}\right)  \right)
_{n=1}^{\infty}\notin%
%TCIMACRO{\dbigcup \limits_{\lambda\in\Gamma}}%
%BeginExpansion
{\displaystyle\bigcup\limits_{\lambda\in\Gamma}}
%EndExpansion
E_{\lambda}$. Let $\left(  a_{i}^{\left(  k\right)  }\right)  _{i=1}^{\infty
}\in\ell_{\tilde{s}}$, $k\in\mathbb{N}$, be such that $z=\lim
\limits_{k\rightarrow\infty}T\left(  \left(  a_{i}^{\left(  k\right)
}\right)  _{i=1}^{\infty}\right)  $ in $E$. Notice that, for each
$k\in\mathbb{N}$,%
\begin{align*}
T\left(  \left(  a_{i}^{\left(  k\right)  }\right)  _{i=1}^{\infty}\right)
&  =%
%TCIMACRO{\tsum \limits_{i=1}^{\infty}}%
%BeginExpansion
{\textstyle\sum\limits_{i=1}^{\infty}}
%EndExpansion
a_{i}^{\left(  k\right)  }y_{i}\\
&  =a_{1}^{\left(  k\right)  }%
%TCIMACRO{\tsum \limits_{k=1}^{\infty}}%
%BeginExpansion
{\textstyle\sum\limits_{k=1}^{\infty}}
%EndExpansion
x_{k}\otimes e_{k}+%
%TCIMACRO{\tsum \limits_{i=2}^{\infty}}%
%BeginExpansion
{\textstyle\sum\limits_{i=2}^{\infty}}
%EndExpansion
a_{i}^{\left(  k\right)  }%
%TCIMACRO{\tsum \limits_{k=1}^{\infty}}%
%BeginExpansion
{\textstyle\sum\limits_{k=1}^{\infty}}
%EndExpansion
x_{k}\otimes e_{i_{k}}\\
&  =%
%TCIMACRO{\tsum \limits_{k=1}^{\infty}}%
%BeginExpansion
{\textstyle\sum\limits_{k=1}^{\infty}}
%EndExpansion
a_{1}^{\left(  k\right)  }x_{k}\otimes e_{k}+%
%TCIMACRO{\tsum \limits_{i=2}^{\infty}}%
%BeginExpansion
{\textstyle\sum\limits_{i=2}^{\infty}}
%EndExpansion%
%TCIMACRO{\tsum \limits_{k=1}^{\infty}}%
%BeginExpansion
{\textstyle\sum\limits_{k=1}^{\infty}}
%EndExpansion
a_{i}^{\left(  k\right)  }x_{k}\otimes e_{i_{k}}\\
&  =\left(  w_{r}^{\left(  k\right)  }\right)  _{r=1}^{\infty}\text{,}%
\end{align*}
where
\[
w_{r}^{\left(  k\right)  }=\left\{
\begin{array}
[c]{ll}%
a_{1}^{\left(  k\right)  }x_{1_{m}}\text{,} & \text{\hspace{-0.25cm}if
}r=1_{m}\in\mathbb{N}_{1}\text{,}\\
a_{1}^{\left(  k\right)  }x_{i_{m}}+a_{i}^{\left(  k\right)  }x_{m}\text{,} &
\text{\hspace{-0.25cm}if }r=i_{m}\in\mathbb{N}_{i}\text{ and }i\geq2\text{.}%
\end{array}
\right.
\]
The condition (b2) of Definition \ref{Def3.1.1} assures that convergence in
$E$ implies coordinatewise convergence. Fixed some $p\in\mathbb{N}$ such that
$x_{1_{p}}\neq0$, we have%
\[
z_{1_{p}}=\lim\limits_{k\rightarrow\infty}a_{1}^{\left(  k\right)  }x_{1_{p}%
}=\left(  \lim\limits_{k\rightarrow\infty}a_{1}^{\left(  k\right)  }\right)
x_{1_{p}}\text{,}%
\]
that is, $\lim\limits_{k\rightarrow\infty}a_{1}^{\left(  k\right)  }$ exists.
Let $a_{1}=\lim\limits_{k\rightarrow\infty}a_{1}^{\left(  k\right)  }$. On the
other hand, if $n=i_{p}$, $i\geq2$, where $p$ is chosen satisfying $x_{p}%
\neq0$, then%
\[
z_{i_{p}}=\lim\limits_{k\rightarrow\infty}\left(  a_{1}^{\left(  k\right)
}x_{i_{p}}+a_{i}^{\left(  k\right)  }x_{p}\right)  \text{.}%
\]
Thus,%
\begin{align*}
\left(  \lim\limits_{k\rightarrow\infty}a_{i}^{\left(  k\right)  }\right)
x_{p}  &  =\lim\limits_{k\rightarrow\infty}a_{i}^{\left(  k\right)  }x_{p}\\
&  =\lim\limits_{k\rightarrow\infty}\left[  \left(  a_{1}^{\left(  k\right)
}x_{i_{p}}+a_{i}^{\left(  k\right)  }x_{p}\right)  -a_{1}^{\left(  k\right)
}x_{i_{p}}\right] \\
&  =\lim\limits_{k\rightarrow\infty}\left(  a_{1}^{\left(  k\right)  }%
x_{i_{p}}+a_{i}^{\left(  k\right)  }x_{p}\right)  -\lim\limits_{k\rightarrow
\infty}a_{1}^{\left(  k\right)  }x_{i_{p}}\\
&  =z_{i_{p}}-a_{1}x_{i_{p}}\text{,}%
\end{align*}
that is, $\lim\limits_{k\rightarrow\infty}a_{i}^{\left(  k\right)  }$ exists
and it will be denoted by $a_{i}$. Therefore, coordinatewise convergence
yields%
\[
z_{n}=\left\{
\begin{array}
[c]{ll}%
a_{1}x_{1_{m}}\text{,} & \text{\hspace{-0.25cm}if }n=1_{m}\in\mathbb{N}%
_{1}\text{,}\\
a_{1}x_{i_{m}}+a_{i}x_{m}\text{,} & \text{\hspace{-0.25cm}if }n=i_{m}%
\in\mathbb{N}_{i}\text{ and }i\geq2\text{.}%
\end{array}
\right.
\]
Since $\left(  z_{n}\right)  _{n=1}^{\infty}$ is non-zero is immediate that
$a_{i}\neq0$ for some $i\in\mathbb{N}$.

If $a_{1}\neq0$, we have $z_{1_{m}}=a_{1}x_{1_{m}}$ for all $m\in\mathbb{N}$.
Note that
\[
\left(  f\left(  z_{1_{m}}\right)  \right)  _{m=1}^{\infty}=\left(  f\left(
a_{1}x_{1_{m}}\right)  \right)  _{m=1}^{\infty}=\left(  f\left(  a_{1}%
x_{j}\right)  \right)  _{j\in\mathbb{N}_{1}}\text{.}%
\]
Since, for all $\lambda\in\Gamma$, $f$ is compatible with $E_{\lambda}$ and
$\left(  f\left(  x_{j}\right)  \right)  _{j\in\mathbb{N}_{1}}\notin
E_{\lambda}$, we have%
\[
\left(  f\left(  z_{1_{m}}\right)  \right)  _{m=1}^{\infty}\notin E_{\lambda}%
\]
for all $\lambda$. Hence, there exists a subsequence of $\left(  f\left(
z_{n}\right)  \right)  _{n=1}^{\infty}$ not belonging to $E_{\lambda}$ for all
$\lambda$. Since $E_{\lambda}$ is a strongly invariant sequence space for all
$\lambda\in\Gamma$, it follows that%
\[
\left(  f\left(  z_{n}\right)  \right)  _{n=1}^{\infty}\notin E_{\lambda
}\text{,}%
\]
for all $\lambda\notin\Gamma$.

If $a_{1}=0$ and $a_{p}\neq0$ for some $p>1$, then $z_{p_{m}}=a_{p}x_{m}$ for
all $m\in\mathbb{N}$ and, in this case, $\left(  f\left(  z_{p_{m}}\right)
\right)  _{m=1}^{\infty}=\left(  f\left(  a_{p}x_{m}\right)  \right)
_{m=1}^{\infty}$. An analogous argument allows us to conclude that%
\[
\left(  f\left(  z_{n}\right)  \right)  _{n=1}^{\infty}\notin E_{\lambda
}\text{.}%
\]
This completes the proof that $z\in G\left(  E,f,\left(  E_{\lambda}\right)
_{\lambda\in\Gamma}\right)  $ and therefore $G\left(  E,f,\left(  E_{\lambda
}\right)  _{\lambda\in\Gamma}\right)  $ is pointwise $\mathfrak{c}$-spaceable.
\end{proof}

We stress that the previous theorem does not cover results of the type
(\ref{s44}), but this can be done with similar arguments as follows.

If $F$ is an invariant sequence space over the scalar field $\mathbb{K}$ and
$Y$ is a Banach space, we define
\[
F^{w}\left(  Y\right)  :=\left\{  \left(  x_{j}\right)  _{j=1}^{\infty}\in
Y^{\mathbb{N}}:\left(  \varphi\left(  x_{j}\right)  \right)  _{j=1}^{\infty
}\in F\text{ for all }\varphi\in Y^{\prime}\right\}
\]
and note that
\[
\sup_{\left\Vert \varphi\right\Vert \leq1}\left\Vert \left(  \varphi\left(
x_{j}\right)  \right)  _{j=1}^{\infty}\right\Vert _{F}<\infty
\]
for all $\left(  x_{j}\right)  _{j=1}^{\infty}\in F^{w}\left(  Y\right)  $
(see \cite[p. 178]{NP}).

If $F=\ell_{p}$, the respective space $F^{w}\left(  Y\right)  $ is the
well-known invariant sequence space $\ell_{p}^{w}\left(  Y\right)  $.
According to \cite[Definition 3.3]{NP}, if $X$ and $Y$ are Banach spaces and
$F$ is an invariant sequence space over $\mathbb{K}$, then a map $f\colon
X\rightarrow Y$ such that $f\left(  0\right)  =0$ is called strongly
compatible with $F^{w}\left(  Y\right)  $ if $\varphi\circ f$ is compatible
with $F$ for all continuous linear functionals $\varphi\colon Y\rightarrow
\mathbb{K}$. It is obvious that any strongly non-contractive map $f\colon
X\rightarrow Y$ is strongly compatible with $\ell_{p}^{w}\left(  Y\right)  $
and with%
\[
c_{0}^{w}\left(  Y\right)  :=\left\{  \left(  y_{j}\right)  _{j=1}^{\infty}%
\in\ell_{\infty}\left(  Y\right)  :\lim\limits_{j\rightarrow\infty}%
\varphi\left(  y_{j}\right)  =0\text{, for all }\varphi\in Y^{\prime}\right\}
\text{.}%
\]
The following result was proved in \cite[Theorem 2.5]{NP}:

\begin{theorem}
\label{Teo3.3.6}Let $X$ and $Y$ be Banach spaces, $\Gamma$ an arbitrary set,
$E$ an invariant sequence space over $X$ and, for all $\lambda\in\Gamma$, let
$F_{\lambda}$ be a strongly invariant sequence space over $\mathbb{K}$. If
$f\colon X\rightarrow Y$ is strongly compatible with $F_{\lambda}^{w}\left(
Y\right)  $ for all $\lambda\in\Gamma$, then
\[
G^{w}\left(  E,f,\left(  F_{\lambda}\right)  _{\lambda\in\Gamma}\right)
:=\left\{  \left(  x_{j}\right)  _{j=1}^{\infty}\in E:\left(  f\left(
x_{j}\right)  \right)  _{j=1}^{\infty}\notin%
%TCIMACRO{\dbigcup \limits_{\lambda\in\Gamma}}%
%BeginExpansion
{\displaystyle\bigcup\limits_{\lambda\in\Gamma}}
%EndExpansion
F_{\lambda}^{w}\left(  Y\right)  \right\}
\]
is either empty or spaceable.
\end{theorem}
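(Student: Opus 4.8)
The plan is to run the blueprint of the proof of Theorem \ref{Teo1}, adapting the ``mother vector'' construction to the weak setting. Since the conclusion here is only ordinary spaceability --- no prescribed vector need sit inside the subspace --- I will not need the splitting device of Lemma \ref{Lema2.2}; instead I can support the building blocks on \emph{pairwise disjoint} index sets, which streamlines all the coordinate bookkeeping.

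First I would record the routine reductions. Assuming $G^{w}(E,f,(F_{\lambda})_{\lambda\in\Gamma})$ is non-empty, I fix $x=(x_{j})_{j=1}^{\infty}\in G^{w}$. Because $f(0)=0$ and each $F_{\lambda}$ is strongly invariant, we have $c_{00}(\mathbb{K})\subset F_{\lambda}$, hence $c_{00}(Y)\subset F_{\lambda}^{w}(Y)$ (every $\varphi\in Y'$ sends an eventually null sequence to an eventually null scalar sequence). Thus if $x$ had only finitely many nonzero entries then $(f(x_{j}))_{j}$ would be eventually null and so lie in $\bigcup_{\lambda}F_{\lambda}^{w}(Y)$, a contradiction; therefore $x^{0}\neq 0$. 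Unwinding the hypothesis, for each $\lambda$ there is some $\varphi_{\lambda}\in Y'$ with $(\varphi_{\lambda}(f(x_{j})))_{j}\notin F_{\lambda}$. Writing $g_{\lambda}:=\varphi_{\lambda}\circ f\colon X\to\mathbb{K}$, strong compatibility of $f$ with $F_{\lambda}^{w}(Y)$ means exactly that $g_{\lambda}$ is compatible with $F_{\lambda}$, so for every scalar $a\neq 0$ we retain $(g_{\lambda}(a x_{j}))_{j}\notin F_{\lambda}$.

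Next I would choose a partition $\mathbb{N}=\bigsqcup_{i=1}^{\infty}\mathbb{N}_{i}$ into infinitely many infinite sets, write $\mathbb{N}_{i}=\{i_{1}<i_{2}<\cdots\}$, and set $y_{i}:=\sum_{k=1}^{\infty}x_{k}\otimes e_{i_{k}}$, the copy of $x$ supported on $\mathbb{N}_{i}$. Since $y_{i}^{0}=x^{0}\neq 0$ lies in $E$ and $E$ is invariant, each $y_{i}\in E$ with $\|y_{i}\|_{E}\leq K\|x^{0}\|_{E}$; disjointness of the supports makes $\{y_{i}\}$ linearly independent, read off from a single coordinate where $x_{m}\neq 0$. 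As in Theorem \ref{Teo1}, taking $\tilde{s}=1$ if $E$ is Banach and $\tilde{s}=s$ if $E$ is an $s$-quasi-Banach space, I define $T\colon\ell_{\tilde{s}}\to E$ by $T((a_{i})_{i})=\sum_{i}a_{i}y_{i}$. The same estimate $\sum_{i}\|a_{i}y_{i}\|_{E}^{\tilde{s}}\leq K^{\tilde{s}}\|x^{0}\|_{E}^{\tilde{s}}\sum_{i}|a_{i}|^{\tilde{s}}<\infty$ shows $T$ is well defined, bounded, linear, and injective, so $\overline{T(\ell_{\tilde{s}})}$ is a closed infinite-dimensional subspace of $E$, which already certifies spaceability once the inclusion below is checked.

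Finally, the crux is to verify $\overline{T(\ell_{\tilde{s}})}\setminus\{0\}\subset G^{w}$. For a nonzero $z=(z_{n})_{n}$ in the closure I would use condition (b2) of Definition \ref{Def3.1.1} to upgrade convergence in $E$ to coordinatewise convergence; because the blocks are disjoint, this produces scalars $a_{i}$ with $z_{n}=a_{i}x_{k}$ whenever $n=i_{k}\in\mathbb{N}_{i}$, and $z\neq 0$ forces $a_{i_{0}}\neq 0$ for some $i_{0}$. Fixing an arbitrary $\lambda$ and using the witness $\varphi_{\lambda}$, the restriction $(\varphi_{\lambda}(f(z_{n})))_{n\in\mathbb{N}_{i_{0}}}$ equals $(g_{\lambda}(a_{i_{0}}x_{k}))_{k}$, which is outside $F_{\lambda}$ by compatibility; being a subsequence of $(\varphi_{\lambda}(f(z_{n})))_{n}$ and $F_{\lambda}$ being strongly invariant, the whole sequence avoids $F_{\lambda}$, so $(f(z_{n}))_{n}\notin F_{\lambda}^{w}(Y)$. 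Letting $\lambda$ range over $\Gamma$ gives $z\in G^{w}$. The step I expect to demand the most care is exactly this last one: managing the $\lambda$-dependence of the witnessing functional $\varphi_{\lambda}$ for the limit vector $z$, and invoking strong invariance to promote ``one bad block'' into ``the whole image sequence is bad,'' together with the argument that the coordinate limits $a_{i}$ genuinely exist.
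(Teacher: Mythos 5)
Your proposal is correct, and the comparison with the paper is slightly unusual: the paper never proves Theorem \ref{Teo3.3.6} itself (it quotes it from \cite{NP}), and its own machinery --- the proof of Theorem \ref{Teo1}, adapted \emph{mutatis mutandis} in the final theorem --- is built for the stronger \emph{pointwise} conclusion under the extra nestedness hypothesis. Your route is the classical disjoint-support mother-vector argument of \cite{BDFP, BF, NP}, and it diverges from the paper's Theorem \ref{Teo1} proof in exactly the right place. Because only ordinary spaceability is claimed, you may place reindexed copies of the \emph{whole} vector $x$ on pairwise disjoint sets $\mathbb{N}_{i}$; then every nonzero $z$ in the closure restricts on each block to a scalar multiple of a reindexed copy of all of $x$, and compatibility of $\varphi_{\lambda}\circ f$ with $F_{\lambda}$ together with strong invariance of $F_{\lambda}$, applied one $\lambda$ at a time through the witness functional $\varphi_{\lambda}$, shows $z\in G^{w}$; nestedness is never used, which is consistent with the statement, where no such hypothesis appears. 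In Theorem \ref{Teo1}, by contrast, the constructed subspace must contain $y_{1}=x$ itself, whose support meets every block; that overlap is what forces Lemma \ref{Lema2.2} (some ``half'' of $x$ must remain outside all $E_{\lambda}$), and that lemma is precisely where nestedness enters. So the trade-off is clear: your disjoint blocks buy the theorem in full generality, but the subspace need not contain the mother vector; the paper's overlapping blocks buy pointwise spaceability at the cost of the nestedness hypothesis. The two delicate steps you flagged --- existence of the coordinate limits $a_{i}$ (via condition (b2) of Definition \ref{Def3.1.1} and a coordinate with $x_{m}\neq 0$) and the promotion of one bad block to a bad whole sequence (via the subsequence characterization of strong invariance) --- are handled exactly as in the paper's proof of Theorem \ref{Teo1}, and your use of a $\lambda$-dependent witness $\varphi_{\lambda}$ is exactly what refuting membership in $F_{\lambda}^{w}\left(  Y\right)  $ requires.
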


The proof of Theorem \ref{Teo1} can be adapted, \textit{mutatis mutandis}, to
obtain the following result, which shows that Theorem \ref{Teo3.3.6} is valid
for pointwise spaceability.

\begin{theorem}
Let $X$ and $Y$ be Banach spaces, $\Gamma$ an arbitrary set, $E$ an invariant
sequence space over $X$ and, for all $\lambda\in\Gamma$, let $F_{\lambda}$ be
a strongly invariant sequence space over $\mathbb{K}$. If $f\colon
X\rightarrow Y$ is strongly compatible with $F_{\lambda}^{w}\left(  Y\right)
$ for all $\lambda\in\Gamma$ and $\left(  F_{\lambda}^{w}\left(  Y\right)
\right)  _{\lambda\in\Gamma}$ is nested, then $G^{w}\left(  E,f,\left(
F_{\lambda}\right)  _{\lambda\in\Gamma}\right)  $ is either empty or pointwise
$\mathfrak{c}$-spaceable.
\end{theorem}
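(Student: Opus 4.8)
The plan is to run the proof of Theorem~\ref{Teo1} \emph{mutatis mutandis}, with each strongly invariant sequence space $E_{\lambda}$ over $Y$ replaced by $F_{\lambda}^{w}(Y)$. The one structural fact I must secure before the substitution is legitimate is the following observation: \emph{if $F$ is a strongly invariant sequence space over $\mathbb{K}$, then $F^{w}(Y)$ is a strongly invariant sequence space over $Y$}. Indeed, $F^{w}(Y)$ is already an invariant sequence space over $Y$ (see \cite{NP}); moreover $c_{00}(Y)\subset F^{w}(Y)$, since for a finitely supported $Y$-valued sequence $(x_{j})$ every scalar sequence $(\varphi(x_{j}))$ is finitely supported and hence lies in $c_{00}(\mathbb{K})\subset F$; and $F^{w}(Y)$ is closed under subsequences, because if $(x_{j_{k}})$ is a subsequence of $(x_{j})\in F^{w}(Y)$ then for every $\varphi\in Y'$ the sequence $(\varphi(x_{j_{k}}))$ is a subsequence of $(\varphi(x_{j}))\in F$ and thus lies in $F$, as $F$ is strongly invariant. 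Consequently $(F_{\lambda}^{w}(Y))_{\lambda\in\Gamma}$, which is nested by hypothesis, is a family of nested strongly invariant sequence spaces over $Y$, so Lemma~\ref{Lema2.2} is applicable to it.

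With this in hand I would reproduce the construction of Theorem~\ref{Teo1} verbatim. Fix $x=(x_{j})\in G^{w}(E,f,(F_{\lambda})_{\lambda\in\Gamma})$; since $f(0)=0$ and $c_{00}(Y)\subset F_{\lambda}^{w}(Y)$, the support of $x$ is infinite and $x^{0}\neq0$, and from $(f(x_{j}))_{j}\notin\bigcup_{\lambda}F_{\lambda}^{w}(Y)$ together with invariance one gets $x^{0}\in G^{w}$ exactly as before. Applying Lemma~\ref{Lema2.2} to $(F_{\lambda}^{w}(Y))_{\lambda}$ and the sequence $(f(x_{j}))_{j}$ produces $\mathbb{N}_{1}\subset\mathbb{N}$ with $\operatorname{card}(\mathbb{N}_{1})=\operatorname{card}(\mathbb{N}\setminus\mathbb{N}_{1})=\aleph_{0}$ and $(f(x_{j}))_{j\in\mathbb{N}_{1}}\notin\bigcup_{\lambda}F_{\lambda}^{w}(Y)$. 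I would then take the same partition $(\mathbb{N}_{i})_{i\geq2}$ of $\mathbb{N}\setminus\mathbb{N}_{1}$, the same vectors $y_{i}$ spreading the coordinates of $x$ along $\mathbb{N}_{i}$, and the same operator $T\colon\ell_{\tilde{s}}\to E$, $T((a_{k}))=\sum_{k}a_{k}y_{k}$, with $x=y_{1}\in T(\ell_{\tilde{s}})$. The verifications that $y_{i}\in E$, that $\{y_{i}\}$ is linearly independent, that $y_{i}\in G^{w}$, and that $T$ is well defined, linear, and injective carry over unchanged, since none of them involves the functionals $\varphi$.

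The only genuine modification occurs in the closing step, where I must show that every nonzero $z=(z_{n})\in\overline{T(\ell_{\tilde{s}})}$ satisfies $(f(z_{n}))_{n}\notin\bigcup_{\lambda}F_{\lambda}^{w}(Y)$. As in Theorem~\ref{Teo1}, coordinatewise convergence forces the limits $a_{i}:=\lim_{k}a_{i}^{(k)}$ to exist, gives the explicit form of $z_{n}$, and yields some $i$ with $a_{i}\neq0$. If $a_{1}\neq0$, then $(f(z_{1_{m}}))_{m}=(f(a_{1}x_{j}))_{j\in\mathbb{N}_{1}}$, and here strong compatibility enters: since $(f(x_{j}))_{j\in\mathbb{N}_{1}}\notin F_{\lambda}^{w}(Y)$ there is $\varphi\in Y'$ with $(\varphi(f(x_{j})))_{j\in\mathbb{N}_{1}}\notin F_{\lambda}$, and because $\varphi\circ f$ is compatible with $F_{\lambda}$ and $a_{1}\neq0$ we get $(\varphi(f(a_{1}x_{j})))_{j\in\mathbb{N}_{1}}\notin F_{\lambda}$, hence $(f(a_{1}x_{j}))_{j\in\mathbb{N}_{1}}\notin F_{\lambda}^{w}(Y)$. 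Thus a subsequence of $(f(z_{n}))_{n}$ fails to lie in $F_{\lambda}^{w}(Y)$, and since $F_{\lambda}^{w}(Y)$ is strongly invariant this forces $(f(z_{n}))_{n}\notin F_{\lambda}^{w}(Y)$ for every $\lambda$; the case $a_{1}=0$ with $a_{p}\neq0$ for some $p>1$ is identical, now using the full sequence $(f(a_{p}x_{m}))_{m=1}^{\infty}$ and $(f(x_{m}))_{m=1}^{\infty}\notin F_{\lambda}^{w}(Y)$. This gives $\overline{T(\ell_{\tilde{s}})}\subset G^{w}\cup\{0\}$, and as $x\in T(\ell_{\tilde{s}})$ lies in this closed $\mathfrak{c}$-dimensional subspace, $G^{w}$ is pointwise $\mathfrak{c}$-spaceable. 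The one point requiring care — the main, though modest, obstacle — is precisely this passage through a functional: rather than invoking compatibility of $f$ directly as in Theorem~\ref{Teo1}, one must first extract a single $\varphi$ witnessing $(f(x_{j}))_{j\in\mathbb{N}_{1}}\notin F_{\lambda}^{w}(Y)$ and then apply the compatibility of $\varphi\circ f$, closing the argument with the subsequence stability of the strongly invariant space $F_{\lambda}^{w}(Y)$.
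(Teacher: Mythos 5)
Your proof is correct and takes exactly the route the paper does: the paper's entire ``proof'' of this theorem is the one-sentence remark that Theorem \ref{Teo1} adapts \emph{mutatis mutandis}, and your write-up is precisely that adaptation spelled out. The two points you isolate --- that $F_{\lambda}^{w}\left(  Y\right)  $ inherits strong invariance from $F_{\lambda}$ (so that Lemma \ref{Lema2.2} applies to the nested family) and that the compatibility step must be routed through a single functional $\varphi$ witnessing $\left(  f\left(  x_{j}\right)  \right)  _{j\in\mathbb{N}_{1}}\notin F_{\lambda}^{w}\left(  Y\right)  $ before invoking compatibility of $\varphi\circ f$ with $F_{\lambda}$ --- are exactly the details the paper leaves implicit, and you handle both correctly.
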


\end{document}